\newtheorem{theorem}{Theorem}[section] 
\newtheorem{lemma}[theorem]{Lemma}     
\newtheorem{corollary}[theorem]{Corollary}
\newtheorem{proposition}[theorem]{Proposition}
\providecommand{\Ric}{\mathop{\rm Ric}\nolimits}
\providecommand{\bc}{\mathop{\rm bc}\nolimits}
\providecommand{\Aut}{\mathop{\rm Aut}\nolimits}
\providecommand{\DF}{\mathop{\rm DF}\nolimits}
\providecommand{\ZZ}{\mathbb{Z}}
\providecommand{\QQ}{\mathbb{Q}}
\providecommand{\RR}{\mathbb{R}}
\providecommand{\CC}{\mathbb{C}}
\providecommand{\PP}{\mathbb{P}}
\providecommand{\X}{\mathcal{X}}
\providecommand{\inte}{\text{int}}
\providecommand{\Hom}{\text{Hom}}
\renewcommand{\L}{\mathcal{L}}
\providecommand{\wdiv}{\mathop{\rm Div}\nolimits}
\providecommand{\vol}{\mathop{\rm vol}\nolimits}
\def\diagram1{

\definecolor{wrwrwr}{rgb}{0.3803921568627451,0.3803921568627451,0.3803921568627451}
\definecolor{rvwvcq}{rgb}{0.08235294117647059,0.396078431372549,0.7529411764705882}

\begin{tikzpicture}[scale=0.4,line cap=round,line join=round,>=triangle 45,x=1cm,y=1cm]
\clip(-8.69388785352341,-6.85149354395804) rectangle (12.217649006953684,5.977670174126094);
\fill[line width=0.4pt,color=rvwvcq,fill=rvwvcq,fill opacity=0.10000000149011612] (3.95,-3.22) -- (-4.499755258769425,-0.465188547516475) -- (-4.021306091512521,1.7101502921524365) -- (3.8019942792933907,3.9467619721580767) -- (9.206358516755902,2.6979556386504635) -- (7.16248304580607,-1.3101637913940103) -- cycle;
\draw [line width=0.4pt,color=wrwrwr] (-4.499755258769425,-0.465188547516475)-- (-4.021306091512521,1.7101502921524365);
\draw [line width=0.4pt,color=wrwrwr] (-4.021306091512521,1.7101502921524365)-- (3.8019942792933907,3.9467619721580767);
\draw [line width=0.4pt,color=wrwrwr] (-4.499755258769425,-0.465188547516475)-- (3.95,-3.22);
\draw [line width=0.4pt,color=wrwrwr] (2.4376800090649002,0.7867993541259463)-- (-16.628346540838827,-10.065412601807587);
\draw [line width=0.4pt,color=rvwvcq] (3.95,-3.22)-- (-4.499755258769425,-0.465188547516475);
\draw [line width=0.4pt,color=rvwvcq] (-4.499755258769425,-0.465188547516475)-- (-4.021306091512521,1.7101502921524365);
\draw [line width=0.4pt,color=rvwvcq] (-4.021306091512521,1.7101502921524365)-- (3.8019942792933907,3.9467619721580767);
\draw [line width=0.4pt,color=rvwvcq] (3.8019942792933907,3.9467619721580767)-- (9.206358516755902,2.6979556386504635);
\draw [line width=0.4pt,color=rvwvcq] (9.206358516755902,2.6979556386504635)-- (7.16248304580607,-1.3101637913940103);
\draw [line width=0.4pt,color=rvwvcq] (7.16248304580607,-1.3101637913940103)-- (3.95,-3.22);
\draw [line width=0.4pt,color=wrwrwr] (-4.499755258769425,-0.465188547516475)-- (-6.0703572252974505,-0.590007254352775);
\draw [line width=0.4pt,color=wrwrwr,domain=-8.69388785352341:12.217649006953684] plot(\x,{(-7.125388691254146-1.5706019665280255*\x)/0.12481870683630003});
\draw [line width=0.4pt,color=wrwrwr] (-0.2748776293847124,-1.8425942737582377)-- (-0.6233328024886629,-2.9114009714819993);
\draw (2.430829827729521,1.0842605845140028) node[anchor=north west] {$b$};
\draw (-4.2953174358945745,-2.746161154171117) node[anchor=north west] {$p_w$};
\draw (-2,-1.4) node[anchor=north west] {$q$};
\draw (-0.65,-2.471250503069314) node[anchor=north west] {$n$};
\draw (-8.3,4) node[anchor=north west] {$\Pi(w,a)$};
\draw [->,line width=0.05pt,color=wrwrwr] (-0.2748776293847124,-1.8425942737582377) -- (-0.6233328024886629,-2.9114009714819993);
\draw [->,line width=0.05pt,color=wrwrwr] (-4.499755258769425,-0.465188547516475) -- (-6.0703572252974505,-0.590007254352775);
\draw (-6.3296562540479115,-0.505) node[anchor=north west] {$w$};
\draw (-0.20831242284778076,2.0372841750002526) node[anchor=north west] {$\Huge{P}$};
\draw (-0.4648956972094629,-0.5652033220968145) node[anchor=north west] {$O$};
\draw (-4.533573333516137,0.2) node[anchor=north west] {$a$};
\begin{scriptsize}
\draw [fill=wrwrwr] (-4.499755258769425,-0.465188547516475) circle (1.5pt);
\draw [fill=wrwrwr] (-0.4676952405849731,-0.8669142782450808) circle (1.5pt);
\draw [fill=wrwrwr] (2.4376800090649002,0.7867993541259463) circle (1.5pt);
\draw [fill=wrwrwr] (-4.294715291030033,-3.0452198962684096) circle (2pt);
\draw [fill=wrwrwr] (-1.4873591410315417,-1.4472978596114847) circle (1.5pt);
\end{scriptsize}
\end{tikzpicture}
}
\title[$R(X)$ for Fano $T$-manifolds of Complexity $1$]{Greatest Lower Bounds on Ricci Curvature for Fano $T$-manifolds of Complexity $1$} 
\author{Jacob Cable}
\begin{document}
\maketitle

\begin{abstract}
In this short note we determine the greatest lower bounds on Ricci curvature for all Fano \(T\)-manifolds of complexity one, generalizing the result of Chi Li. Our method of proof is based on the work of Datar and Sz\'ekelyhidi, using the description of complexity one special test configurations given by Ilten and  S{\"u}{\ss}.
\end{abstract}

\section{Introduction} 
\label{intro}
A fundamental problem in K\"ahler geometry is finding K\"ahler-Einstein metrics. The cases of \(c_1(X)<0\) and \(c_1(X) = 0\) have been settled by Aubin \cite{Aubin1978} and Yau \cite{Yau1977},  and so all that remains is the case of Fano manifolds, where \(-K_X\) is ample. Fix a K\"ahler form \(\omega \in 2 \pi c_1(X)\). The problem is then to find a K\"ahler form \(\omega' \) in the same class such that 
\[
\Ric(\omega') = \omega'.
\]
A powerful technique for dealing with this equation is the continuity method, where for \(t \in [0,1]\) one considers solutions \(\omega_t\) to the continuity path:
\[
\Ric(\omega_t) = t\omega_t + (1-t) \omega.
\]
From \cite{Yau1977} we always have a solution for \(t = 0\). However Tian \cite{Tian1992} showed that for some \(t\) sufficiently close to \(1\) there may not be a solution for certain Fano manifolds. In this case it is natural to ask for the supremum of permissible \(t\), which turns out to be independent of the choice of \(\omega\). This invariant was first discussed, although not explicitly defined, by Tian in \cite{Tian1987}. It was first explicitely defined by Rubenstein in \cite{Rubinstein2008} and was further studied by Szekelyhidi in \cite{Szekelyhidi2011}. Tian's invariant is defined as follows:
\begin{definition}
Let \((X,\omega)\) be a K\"ahler manifold with \(\omega \in 2 \pi c_1(X)\). Define:
\[
R(X) := \sup ( t \in [0,1]   : \exists \ \omega_t \in 2 \pi c_1(X)  \ \Ric( \omega_t) = t \omega_t + (1-t) \omega ).
\]
\end{definition}
In \cite{Rubinstein2008} Rubenstein showed relation between \(R(X)\) and Tian's alpha invariant \(\alpha(X)\), and in \cite{Rubinstein2009} conjectured that \(R(X)\) characterizes the \(K\)-semistability of \(X\). This conjecture was later verified by Li in \cite{Li2017}.

In \cite{Li2011} Li  determined a simple formula for \(R(X_\Delta)\), where \(X_\Delta\) is the polarized toric Fano manifold determined by a reflexive lattice polytope \(\Delta\). This result was later recovered in \cite{Datar2016}, by Datar and Sz\'ekelyhidi, using notions of \(G\)-equivariant \(K\)-stability. Using this same method we obtain an effective formula for manifolds with a torus action of complexity one, in terms of the combinatorial data of its divisorial polytope. Previously \(R(X)\) has been calculated for group compactifications by Delcroix \cite{Delcroix2017} and for homogeneous toric bundles by Yao \cite{Yao2017}.

Let \(X\) be a complexity one Fano \(T\)-variety, that is a Fano variety admitting an effective action of an algebraic torus with maximal orbits of codimension one. We have mutually dual character and cocharacter lattices \(M = \Hom(T,\CC^*), \ N = \Hom(\CC^*,T)\) respectively. Similar to toric varieties there exists a combinatorial description for \(X\). Let \(\mu: X \to M_\mathbb{R} \) be the moment map for the torus action. Its image is a lattice polytope \(\Box\). The push-forward of the Liouville measure under \(\mu\) is known as the Duistermaat-Heckman measure on \(\Box\), which we denote \(\nu\).

By the criterion for \(K\)-stability in \cite{Ilten2017} we know that if the weighted barycenter \(\bc_\nu(\Box)\) coincides with the origin then \(X\) is \(K\)-stable and so \(R(X) = 1\). Moreover it is shown that there are finitely many normal equivariant toric degenerations of \(X\) and that the corresponding lattice polytopes \(\Delta_1,\dots,\Delta_m \subseteq M'_\RR := M_\RR \times \RR\) may be described using the combinatorial data for \(X\).

Suppose now \(\bc_\nu(\Box) \neq 0\). Let \(q\) be the intersection of the ray generated by \(-\bc_\nu(\Box)\) with \(\partial \Box\). Consider the halfspace \(H := N_\RR \times \RR^+ \subset N'_\RR\). Let \(q_i\) be the point of intersection of \(\partial \Delta_i\) with the ray generated by \(-\bc(\Delta_i)\), where \(\bc(\Delta_i)\) is the barycenter of \(\Delta_i\). This is well defined since \(\pi(\bc(\Delta_i)) = \bc_\nu(\Box)\), where \(\pi\) is the projection to \(M_\RR\). Let \(F_i\) be  the face of \(\Delta_i\) in which \(q_i\) lies, and let \(S\) be the set of indices \(i\) for which all outer normals to \(F_i\) lie in \(H\). We may now state our result:
\begin{theorem} Let \(X\) be a complexity one Fano \(T\)-variety as above. If \(\bc_\nu(\Box) = 0\) then \(R(X) = 1\). Otherwise we have:
\[
R(X) = \min \  \left\lbrace \frac{|q|}{|q-\bc_\nu(\Box)|}  \ \right\rbrace \cup \ \left\lbrace \frac{|q_i|}{|q_i - \bc(\Delta_i)|} \right\rbrace_{i \in S} .
\]
\ \\
\end{theorem}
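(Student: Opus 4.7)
The plan is to apply the equivariant $K$-stability criterion of Datar--Sz\'ekelyhidi with respect to the torus $T \subseteq \Aut(X)$, which reduces the computation of $R(X)$ to an infimum over $T$-equivariant special test configurations. Since $T$ acts with complexity one, the Ilten--S\"uss classification tells us that these test configurations come in two families: \emph{vertical} ones obtained from one-parameter subgroups of $T$ (parameterized by rays in $N_\RR$), and \emph{horizontal} ones coming from the $m$ normal equivariant toric degenerations $X_{\Delta_i}$ of $X$. My target is to compute the modified (twisted by $t$) Futaki invariant for each of these and then to take an infimum.

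First I would handle the vertical test configurations. For a ray $\RR^+ u \subset N_\RR$, the induced test configuration is supported on $\Box$ and its modified Futaki invariant may be written as a linear functional against the Duistermaat--Heckman measure $\nu$. The ratio appearing in the Datar--Sz\'ekelyhidi formula then has the form $\langle u, q(u)\rangle / \langle u, q(u) - \bc_\nu(\Box)\rangle$, where $q(u)$ is the intersection of $\partial \Box$ with the ray in direction $-u$. Optimising over rays $u$, an elementary convex-geometric argument (the same one used by Li in the toric case) shows that the infimum is attained in the direction of $-\bc_\nu(\Box)$, producing the first term $|q|/|q - \bc_\nu(\Box)|$.

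Next I would treat the horizontal test configurations. Each toric degeneration $X_{\Delta_i}$ is itself a polarized toric Fano whose associated polytope is $\Delta_i \subseteq M'_\RR$. Applying Li's toric formula (or, equivalently, repeating the vertical calculation upstairs on $\Delta_i$) gives the ratio $|q_i|/|q_i - \bc(\Delta_i)|$. However not every $\Delta_i$ contributes: for the induced $\CC^*$-action to produce a valid equivariant special test configuration on $X$ under Ilten--S\"uss, the associated 1-PS must live inside the halfspace $H = N_\RR \times \RR^+$. Translating this condition back to the dual polytope data, one checks that this is precisely the requirement that every outer normal of the face $F_i$ containing $q_i$ lies in $H$, which is the definition of $S$. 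For $i\notin S$ the corresponding toric degeneration does not give an admissible test configuration and so is omitted from the minimum.

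The main obstacle I expect is the correct bookkeeping at the interface between the two families. Concretely, the hard part is to verify that the Ilten--S\"uss special test configurations exhaust \emph{all} $T$-equivariant special test configurations relevant to the Datar--Sz\'ekelyhidi criterion, and then to identify the twisted Futaki invariant of the horizontal ones with the toric Futaki invariant on $\Delta_i$. Once this identification and the halfspace condition defining $S$ are pinned down, the formula follows by combining the two minimisations above; the case $\bc_\nu(\Box)=0$ corresponds to the $K$-semistability result of \cite{Ilten2017} and gives $R(X)=1$ immediately.
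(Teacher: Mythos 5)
Your overall strategy --- the Datar--Sz\'ekelyhidi equivariant criterion, the Ilten--S\"u{\ss} dichotomy between product and non-product (toric) special test configurations, and a convex-geometric optimisation of the twisted Futaki ratio --- is exactly the route the paper takes. The product (``vertical'') computation is essentially right, with one small slip: the infimum over one-parameter subgroups $w\in N_\RR$ is attained when $w$ is an \emph{outer normal} to $\Box$ at $q$, not in the direction of $-\bc_\nu(\Box)$; the value $|q|/|q-\bc_\nu(\Box)|$ is nevertheless correct, and the paper establishes it via the continuity-path lemma of Section 3 and part (1) of its corollary.

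The genuine gap is in your treatment of the indices $i\notin S$. You assert that for such $i$ the toric degeneration ``does not give an admissible test configuration and so is omitted from the minimum.'' That is not what happens: every $\Delta_i$ arises as the special fibre of genuine $T$-equivariant special test configurations, one for each admissible one-parameter subgroup $v'=(-v,1)\in H=N_\RR\times\RR^+$. The set $S$ does not select which degenerations exist; it records for which $i$ the \emph{unconstrained} minimiser of the ratio (an outer normal to $\Delta_i$ at $q_i$) happens to lie in the admissible halfspace $H$. When $i\notin S$ you must still compute $\inf_{v'\in H}$ of the twisted Futaki ratio for that degeneration, and a priori this could be strictly smaller than every term in your claimed minimum, in which case your formula would overestimate $R(X)$. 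The paper closes exactly this hole (part (2) of the corollary in Section 3, used in the second proposition of Section 4): the constrained infimum over $H$ is attained on $\partial H=N_\RR\times\{0\}$, where the functional descends through the projection $\pi$ to the corresponding functional on $\Box=\pi(\Delta_i)$ with $\pi(\bc(\Delta_i))=\bc_\nu(\Box)$, so those configurations contribute precisely $|q|/|q-\bc_\nu(\Box)|$ --- a value already present from the product configurations --- and only then may they be discarded from the minimum. Without this boundary/projection argument your proof does not establish the stated formula.
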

\begin{example}
Consider the \((\CC^*)^2\)-threefold 2.30 from the list in \cite{Mori1981}. From \cite{Sues2014} we see there are \(4\) normal toric degenerations, given by the polytopes \(\Delta_1,\dots,\Delta_3\). It can be checked in this case that \(S = \emptyset\), as for each \(i\) there is an outer normal \(n_i \not\in H\) to the face \(F_i\). See Figure 1 (a) for an example.
\begin{figure}[h]
\centering
\begin{subfigure}[t]{0.4\textwidth}
\begin{tikzpicture}[%
    tdplot_main_coords,
    scale=0.6,
    >=stealth
  ]
  \draw[->, ultra thick, opacity = 1] (0, 18/46, -57/184)  -- (0, 0.6, -1.1) node[right]{$n_1$} ;
    \draw[fill=white,opacity=1] (0,-3,1) -- (-3,0,1) -- (3,0,-2) -- cycle;
    \draw[fill=white,opacity=0] (3,0,-2) -- (-3,0,1) -- (-2,1,1) -- (2,1,-1) -- cycle;
    \filldraw[ultra thick, opacity = 1] (0, -6/23, 19/92) circle (1pt) node[above]{$\bc(\Delta_1)$}  -- (0,0,0) circle (1pt) node[below]{O} -- (0, 18/46, -57/184) circle (1pt) node[right]{$q_1$}	 ;
    \draw[fill=white,opacity=0.5] (0,-3,1) -- (-3,0,1) -- (-2,1,1) -- (0,1,1) -- cycle;
    \draw[fill=white,opacity=0.5] (0,-3,1) -- (0,1,1) -- (2,1,-1) -- (3,0,-2) -- cycle;
    \draw[fill=white,opacity=0.5] (0,1,1) -- (-2,1,1) -- (2,1,-1) -- cycle;
     \draw[->,dotted] (0,0,0) -- (0,0,2);
    \draw[->,dotted] (0,0,0) -- (3,0,0);
    \draw[->,dotted] (0,0,0) -- (0,2,0);
\end{tikzpicture}
\caption{The toric degeneration \(\Delta_1\)}
\end{subfigure}
\qquad \qquad \qquad
\begin{subfigure}[t]{0.2\textwidth}
\begin{tikzpicture}[scale = 0.6]
\draw (-2,1) -- (-3,0) -- (-3,0) -- (0,-3) -- (3,0) -- (2,1) -- cycle ;
\filldraw[ultra thick, opacity = 1] (0,-6/23) circle (1pt) node[below]{$\bc_\nu(\Box)$}  -- (0,0) circle (1pt) node[right]{O} -- (0, 1) circle (1pt) node[above]{q}	 ; 
    \draw[dotted] (-4,0) -- (4,0);
    \draw[dotted] (0,-4) -- (0,2);
\end{tikzpicture}
\caption{The moment polytope \(\Box\).}
\end{subfigure}
\caption{Determining \(R(X)\) for threefold 2.30.}
\end{figure}
Therefore \(R(X)\) is given by the first term in the minimum. We calculate \(\bc_\nu(\Box)= (0,-6/23)\) and \(q = (0,1)\). Then:
\[
R(X) = \frac{1}{1+ 6/23} = \frac{23}{29}.
\]
\end{example}
\begin{corollary}
In the table below we calculate \(R(X)\) for \(X\) a Fano threefold admitting a \(2\)-torus action\footnote{We have indicated with * when we refer only to a particular element of the deformation family which admits a \(2\)-torus action.} appearing in the list of Mori and Mukai \cite{Mori1981}. We only include those where \(R(X) <1\). Note all admit a K{\"a}hler-Ricci soliton by \cite{Ilten2017} and \cite{Cable2018}.
\begin{table}[h]
\begin{tabular}[h]{|c|c|}
\hline
X & R(X) \\
\hline
2.30 & \(23/29\) \\
2.31 & \(23/27\) \\
3.18 & \(48/55\) \\
3.21 & \(76/97\) \\
3.22 & \(40/49\) \\
3.23 & \(168/221\) \\
3.24 & \(21/25\) \\
4.5* & \(64/69\) \\
4.8 & \(76/89\) \\
\hline
\end{tabular}
\caption{Calculations for complexity \(1\) threefolds appearing in the list of Mori and Mukai for which \(R(X) <1\).}
\label{threefoldtable}
\end{table}
\end{corollary}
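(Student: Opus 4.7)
The plan is to apply the main Theorem of the paper to each of the nine Fano threefolds listed, proceeding exactly as in the worked example of 2.30. For each variety $X$ the computation splits into the same four steps, and no new ideas beyond the main Theorem are needed; the corollary reduces to an itemised numerical check.

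First I would collect the combinatorial data. Each threefold in the table admits an effective action of a two-dimensional torus, hence is a complexity-one $T$-variety. Their divisorial polytopes have been catalogued by S\"u\ss{} in \cite{Sues2014}; from these one reads off the moment polytope $\Box \subset M_\RR$ together with the piecewise-affine fibre data over $\PP^1$ whose pushforward under $\mu$ gives the Duistermaat--Heckman measure $\nu$. Since for a complexity-one $T$-threefold the fibres are intervals, $\nu$ is a piecewise-polynomial measure of degree one on $\Box$, and the weighted barycentre $\bc_\nu(\Box)$ is obtained by an elementary integration. For each of the nine entries one verifies $\bc_\nu(\Box) \neq 0$, so we are in the second case of the Theorem, and we compute $q$ as the intersection of the ray through $-\bc_\nu(\Box)$ with the appropriate edge of $\partial \Box$.

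Second I would enumerate the normal equivariant toric degenerations $\Delta_1,\dots,\Delta_m \subset M'_\RR$. By \cite{Ilten2017} these are in bijection with the marked points of $\PP^1$ carrying non-trivial coefficients of the polyhedral divisor, and each $\Delta_i$ is an explicit $3$-dimensional lattice polytope built from the same combinatorial data. For each $\Delta_i$ one computes the ordinary barycentre $\bc(\Delta_i)$ (noting that $\pi(\bc(\Delta_i)) = \bc_\nu(\Box)$ serves as a consistency check), locates $q_i$ on $\partial \Delta_i$, identifies the face $F_i$ containing it, and lists the outer facet normals of $\Delta_i$ at $F_i$. A normal lies in $H = N_\RR \times \RR^+$ precisely when its last coordinate is non-negative; checking this coordinate condition for every normal to $F_i$ decides whether $i \in S$.

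Third I would assemble the minimum. In the majority of the nine cases (as in the displayed example for 2.30) the set $S$ is empty, because each toric degeneration has a vertical outer normal pointing into $-H$ coming from a facet opposite the special point on $\PP^1$; then $R(X) = |q|/|q-\bc_\nu(\Box)|$ is a single ratio which, for 2.30, evaluates to $23/29$, and analogous single-ratio computations yield the other fractions in the table. In the remaining cases one must also include $|q_i|/|q_i-\bc(\Delta_i)|$ for the surviving $i \in S$ and take the overall minimum; since $\pi(\bc(\Delta_i)) = \bc_\nu(\Box)$ and $\pi(q_i) = q$, it is routine to verify which ratio is actually smallest.

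The main obstacle is bookkeeping rather than mathematics: one must correctly extract the divisorial polytope for each Mori--Mukai entry from \cite{Sues2014}, perform the weighted-barycentre integration without arithmetic slips (the piecewise-polynomial density makes $\bc_\nu(\Box)$ an awkward rational in general), and systematically check, facet by facet, the halfspace condition that determines membership in $S$. Once these data are tabulated, each row of the table is a direct application of the formula of the main Theorem, and the footnoted existence of K\"ahler--Ricci solitons follows from the classifications of \cite{Ilten2017} and \cite{Cable2018}.
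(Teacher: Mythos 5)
Your proposal is correct and follows essentially the same route as the paper, whose own proof simply cites the divisorial polytopes and Duistermaat--Heckman data of \cite{Sues2014}, leaves the verification that every outer normal check fails (i.e.\ $n_i \notin H$, so $S = \emptyset$) to the reader, and then computes $R(X)$ from $(\Box, \bc_\nu(\Box))$ alone. The only discrepancy is that the paper (Remark 1.3) asserts $S = \emptyset$ for \emph{all} nine entries, so your contingency branch for $S \neq \emptyset$ is never exercised --- which is just as well, since your incidental claim there that $\pi(q_i) = q$ need not hold ($q_i$ may lie on a face of $\Delta_i$ whose projection lands in the interior of $\Box$), though this does not affect the validity of the argument as applied to the table.
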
 
\begin{remark}
It is worth noting that for each threefold in Table 1 we have \(S = \emptyset\), and so \(R(X)\) may be calculated from the data \((\Box,\bc_\nu(\Box))\).
\end{remark}
\section{G-equivariant K-stability}
Let \(X\) be a Fano manifold with the action of a complex reductive group \(G\) of automorphisms containing a maximal torus \(T\). Fix a \(T\)-invariant K\"ahler form \(\omega \in 2 \pi c_1(X)\) induced by the Fano condition. Recall that the Lie algebra \(\mathfrak{t}\) of the maximal compact torus in \(T\) may be identified with \(N_\RR = N \otimes \RR\).
\begin{definition}
A \(G\)-equivariant test configuration for \((X,G)\) is a \(\CC^*\)-equivariant flat family \(\X\) over the affine line equipped with a relatively ample equivariant \(\QQ\)-line bundle \(\mathcal{L}\) such that 
\begin{enumerate}
\item The \(\CC^*\)-action \(\lambda\) on \((\X, \mathcal{L})\) lifts the standard action on \(\mathbb{A}^1\);
\item The general fiber is isomorphic to \(X\) and \(\mathcal{L}\) is the relative anti-canonical bundle of \(\X \to \mathbb{A}^1\).
\item The action of \(G\) extends to \((\X,\mathcal{L})\) and commutes with the \(\CC^*\)-action \(\lambda\).
\end{enumerate}
A test configuration with \(\X \cong X \times \mathbb{A}^1\) is called a product configuration. If such an isomorphism exists and is \(\CC^*\)-equivariant then we call the test configuration trivial. Finally a test configuration with normal special fiber is called special.
\end{definition}
In this note we work with \(G = T\) being a maximal torus in \(\Aut(X)\). We then have an induced \(T' = T \times \CC^*\)-action on the special fiber. We follow the conventions in \cite{Berman2014} for moment maps and Hamiltonian functions. There is a canonical lift of \(T'\)-action to \(-K_{\X_0}\), inducing a canonical choice of moment map \(\mu: \X_0 \to M_\RR'\). The restriction of \(\lambda\) to \(\X_0\) is generated by the imaginary part of a \(T'\)-invariant vector field \(w\), and by an abuse of notation we also write \(w \in N'_\RR\) for the corresponding  one-parameter subgroup. The moment map \(\mu\) then specifies a Hamiltonian function \(\theta_w := \langle \mu, w \rangle: \X_0 \to \RR  \).
\begin{definition}
The twisted Donaldson-Futaki character of a special test configuration \((\X, \mathcal{L}) \) is given by:
\[
\DF_t(\X,\mathcal{L})(w) = \DF(\X,\mathcal{L})(w) + \frac{(1-t)}{V} \int_{\X_0} ( \max_{\X_0} \theta_w - \theta_w) \ \omega^n . 
\]
where \(V = \frac{1}{n!} \int_{\X_0} \omega^n\) is the volume of \(\X_0\), and \(\DF(\X,\L)(w) = \frac{1}{V} \int_{\X_0} \theta_w \omega^n\) is the classical Donaldson-Futaki invariant of the configuration, in the form given in \cite[Lemma 3.4]{Berman2014}.
\end{definition}
We say the pair \((X,t)\) is \(G\)-equivariantly \(K\)-semistable if \( \DF_t(\X,\mathcal{L}) \ge 0\) for all \(G\)-equivariant special configurations \((\X,\mathcal{L})\). We then have:
\begin{theorem}[{\cite[Proposition 10]{Datar2016}} ]
Let \(X\) be a polarized Fano manifold, with K\"ahler form \(\omega\). Let \(t \in [0,1]\). Then \((X,t)\) is \(G\)-equivariantly \(K\)-semistable only if for all \(s <t\)  there exists \(\omega_s \in 2 \pi c_1(X)\) such that \(\Ric(\omega_s) = s \omega_s + (1-s) \omega\).
\end{theorem}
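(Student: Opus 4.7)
My plan is to prove the theorem by contrapositive: assuming no solution $\omega_s$ exists for some $s<t$, I would construct a $G$-equivariant special test configuration $(\X,\L)$ together with a one-parameter subgroup $w$ commuting with $G$ such that $\DF_t(\X,\L)(w)<0$, contradicting the hypothesis of $G$-equivariant $K$-semistability of $(X,t)$. The test configuration is to be extracted from the breakdown of the continuity method via a Gromov-Hausdorff degeneration.

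First, I would set $s^\star := \sup\{ s' : \omega_{s'} \text{ exists}\}$. The failure hypothesis combined with the openness part of the continuity method forces $s^\star \le s < t$, with the supremum unattained. Since $\omega$ may be taken $T$-invariant and any $\omega_{s'}$ is unique modulo the identity component of $\Aut(X)$, each $\omega_{s'}$ may be chosen $G$-invariant; I would then fix a sequence $s_k \nearrow s^\star$.

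The analytic heart of the proof is to establish that the sequence $(X,\omega_{s_k})$ admits, after extracting a subsequence, a Gromov-Hausdorff limit $(W,d_\infty)$ in which $W$ is a $\QQ$-Fano variety with a $G$-action and a weak K\"ahler-Einstein-type metric at the parameter $s^\star$. Here I would combine the partial $C^0$ estimate along the continuity path with Donaldson-Sun regularity and Cheeger-Colding-Tian structure theory for non-collapsed limits of Fano manifolds; equivariance of the estimates yields the $G$-action on $W$. Because the continuity path fails at $s^\star<1$, the limit carries a non-trivial holomorphic vector field $w$ in the centralizer of $G$ in $\Aut(W)$, and $W$ arises as the special fibre of a $G$-equivariant special test configuration $(\X,\L)$ whose $\CC^*$-action is generated by $w$.

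The final step is a sign calculation. The soliton-type equation on $W$ at parameter $s^\star$ expresses the classical Donaldson-Futaki invariant as
\begin{equation*}
\DF(\X,\L)(w) \;=\; \frac{1-s^\star}{V}\int_{\X_0}\bigl(\theta_w-\max_{\X_0}\theta_w\bigr)\,\omega^n,
\end{equation*}
so that substitution into the definition of $\DF_t$ yields
\begin{equation*}
\DF_t(\X,\L)(w) \;=\; \frac{t-s^\star}{V}\int_{\X_0}\bigl(\theta_w-\max_{\X_0}\theta_w\bigr)\,\omega^n \;<\; 0,
\end{equation*}
since $t>s^\star$ and $\theta_w$ is non-constant on $\X_0$, giving the desired contradiction. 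The main obstacle is the analytic step: realising the Gromov-Hausdorff limit as a $G$-equivariantly polarised $\QQ$-Fano family together with the algebraic one-parameter subgroup $w$, which depends on the partial $C^0$ estimate along the continuity method and on the structure theory of non-collapsed Fano limits.
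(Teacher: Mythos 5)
The paper offers no proof of this statement: it is imported verbatim from Datar and Sz\'ekelyhidi \cite{Datar2016}, so there is no internal argument to compare yours against. Your outline does follow the strategy of that reference: argue by contrapositive, let \(s^\star\) be the supremum of parameters at which the twisted path is solvable, extract a Gromov--Hausdorff limit \(W\) of \((X,\omega_{s_k})\) via the partial \(C^0\) estimate and Donaldson--Sun regularity, realise \(W\) as the central fibre of a \(G\)-equivariant special test configuration, and conclude from the sign of \(\DF_t\) for \(t>s^\star\). Two caveats. First, everything that makes the theorem hard --- the equivariant partial \(C^0\) estimate along the continuity path, the identification of the limit as an algebraic \(\QQ\)-Fano central fibre carrying an induced \(\CC^*\)-action commuting with \(G\), and the production of the destabilising one-parameter subgroup in the centraliser of \(G\) --- is asserted rather than argued, so what you have is a roadmap through the cited machinery rather than a proof. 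Second, the identity \(\DF(\X,\L)(w)=\frac{1-s^\star}{V}\int_{\X_0}(\theta_w-\max_{\X_0}\theta_w)\,\omega^n\) is written as an equality, whereas in the actual argument the limit satisfies a twisted equation whose twist is a semipositive current \(\beta\) in the class of \(\omega\), and the term \(\max_{\X_0}\theta_w-\theta_w\) enters only as a one-sided bound comparing \(\DF_{s^\star}\) to the vanishing \(\beta\)-twisted Futaki invariant; the inequality points in the favourable direction, so your conclusion \(\DF_t(\X,\L)(w)<0\) survives, but the equality as stated is not what is established. Subject to those reservations, your approach is the same as the source's and there is no alternative route being proposed.
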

\begin{remark}
It follows from this that:
\begin{align*}
R(X) = \inf_{(\X,\L)}( \sup(t | \DF_t(\X,\L) \ge 0) ),
\end{align*}
where \((\X,\L)\) varies over all special test configurations for \((X,L)\).
\end{remark}
\section{A short digression into convex geometry}
Our result relies on an observation regarding certain families of piecewise affine functions on convex polytopes. Let \(V\) be a real vector space and \(P \subset V\) be a convex polytope containing the origin, with \(\dim P = \dim V\). Fix some point \(b \in \text{int}(P)\). Let \(q \in \partial P\) be the intersection of \(\partial P\) with the ray \(\tau = \RR^+ (-b)\). Suppose \(n \in V^\vee\) is an outer normal to a face containing \(q\).

For \(a \in \partial P\) write \(\mathcal{N}(a) = \{w\in V^\vee \ | \langle a,w \rangle = \max_{x \in P} \langle x,c \rangle \}\). For \(w \in \mathcal{N}(a) \) let \(\Pi(a,w)\) be the affine hyperplane tangent to \(P\) at \(a\) with normal \(w\). For \(w \in \inte(\tau^\vee) \) there is a well-defined point of intersection of \(\Pi(a,w)\) and \(\tau\) which we denote \(p_w\). See Figure 2 for a schematic.
\begin{figure}[h]
\diagram1
\caption{An Example in \(V \cong \RR^2\)}
\label{schematic}
\end{figure}
\begin{lemma}
Fix \(w \in \inte(\tau^\vee) \backslash (\RR^+ n)\). For \(s \in [0,1]\) set \(w(s) := sn + (1-s)w\). As \(n \in \tau^\vee \) we may consider \(p(s) := p_{w(s)}\). For \(0 \le s' < s \le 1\) we then have:
\[
\frac{|p(s)|}{|p(s)-b|} < \frac{|p(s')|}{|p(s')-b|}.
\]
\end{lemma}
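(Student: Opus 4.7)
Since $p(s)$ lies on the ray $\tau = \RR^+(-b)$, I would write $p(s) = -t(s)\,b$ for a positive scalar $t(s)$; then $|p(s)|/|p(s)-b| = t(s)/(t(s)+1)$ is a strictly increasing function of $t(s)$, so the claim reduces to showing that $t(s)$ is strictly decreasing on $[0,1]$. Using the defining condition $\langle p(s), w(s)\rangle = h_P(w(s))$, with $h_P(u) := \max_{x \in P}\langle x, u\rangle$ the support function of $P$, we get
\[
t(s) = \frac{h_P(w(s))}{\phi(s)}, \qquad \phi(s) := \langle -b, w(s)\rangle,
\]
where $\phi$ is affine. Note that $\phi$ is strictly positive on $[0,1]$: at $s=0$ because $w \in \inte(\tau^\vee)$, and at $s=1$ because $b \in \inte(P)$ forces $\langle -b, n\rangle > 0$ as soon as $h_P(n) > 0$, which holds since $q \neq 0$.

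Because $n$ is an outer normal to a face of $P$ containing $q$, the point $q$ realizes $h_P(n)$, so $t(1) = \langle q,n\rangle/\langle -b,n\rangle = t_q$ where $q = -t_q b$. Moreover, $q \in P$ gives $h_P(w(s)) \geq \langle q, w(s)\rangle = t_q\,\phi(s)$, hence $t(s) \geq t_q$ throughout $[0,1]$, with equality at $s=1$. To promote this lower bound to strict monotonicity, I would reparametrize by the strictly increasing bijection
\[
\lambda(s) := \frac{s\,\phi(1)}{s\,\phi(1) + (1-s)\,\phi(0)} \colon [0,1] \to [0,1],
\]
under which $w(s)/\phi(s) = \lambda(s)\cdot n/\phi(1) + (1-\lambda(s))\cdot w/\phi(0)$ is a genuine line segment. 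By positive homogeneity and convexity of $h_P$, the function $t(s) = h_P(w(s)/\phi(s))$ is a \emph{convex} function of $\lambda$, and the secant inequality
\[
t(\lambda) \leq \Bigl(1 - \tfrac{\lambda-\lambda'}{1-\lambda'}\Bigr) t(\lambda') + \tfrac{\lambda-\lambda'}{1-\lambda'}\,t_q
\]
combined with $t(\lambda') \geq t_q$ forces $t(\lambda) < t(\lambda')$ whenever $\lambda' < \lambda$ and $t(\lambda') > t_q$.

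The main obstacle is precisely this last condition: the strict separation $t(s) > t_q$ for $s < 1$, which amounts to saying that the intersection $p(s)$ of the tangent hyperplane $\Pi(a,w(s))$ with $\tau$ lies strictly beyond $q$. Geometrically this is the content of Figure~\ref{schematic}, and it would fail exactly when $w(s)$ enters the normal cone $\mathcal{N}(q)$. The hypothesis $w \notin \RR^+ n$, together with the role of $n$ as an outer normal to the specified face through $q$, is what prevents this degeneracy for $s < 1$; I would complete the argument by examining the facial structure of $P$ near $q$ and verifying that the segment $\{w(s)\}_{s \in [0,1)}$ stays outside $\mathcal{N}(q)$, so that the convexity argument above delivers the strict inequality.
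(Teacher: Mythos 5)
Your reduction to the monotonicity of $t(s)$ and the convexity argument are sound as far as they go, and they constitute a genuinely different mechanism from the paper's proof: the paper instead picks a maximizer $a(s)$ of $w(s)$ on $P$, writes $|p(s)-q|/|q| = \langle a(s)-q, w(s)\rangle/\langle q, w(s)\rangle$, and bounds this quotient directly using $\langle a(s)-q,n\rangle\le 0$ and $\langle a(s)-q,w\rangle\le\langle a-q,w\rangle$. Your route replaces that explicit estimate with positive homogeneity and convexity of the support function after normalizing by $\phi(s)=\langle -b,w(s)\rangle$; this is more conceptual, makes the role of $q$ as the value at $s=1$ transparent, and delivers the non-strict inequality $t(\lambda)\le t(\lambda')$ with no further work.

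The gap you flag is, however, real, and the completion you propose cannot succeed as stated. The strictness you need is exactly $t(\lambda')>t_q$, i.e. $\max_{x\in P}\langle x,w(s')\rangle>\langle q,w(s')\rangle$, i.e. $w(s')\notin\mathcal{N}(q)$. The hypothesis $w\notin\RR^+ n$ forces this only when $q$ lies in the relative interior of a facet, so that $\mathcal{N}(q)=\RR^{\ge 0}n$. If $q$ lies on a face of codimension at least two, $\mathcal{N}(q)$ is a cone of dimension at least two containing rays other than $\RR^+n$, and for $w$ on such a ray one has $p(s)=q$ for every $s$, so the strict inequality in the Lemma genuinely fails; there is no facial analysis that will show the segment stays outside $\mathcal{N}(q)$, because it need not. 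The honest fix is to either add the hypothesis $w\notin\mathcal{N}(q)$ (more precisely, $w(s')\notin\mathcal{N}(q)$) or settle for the non-strict inequality. For comparison, the paper's own proof carries the identical hidden assumption: its final step
\[
\frac{(1-s)\langle a-q,w\rangle}{s\langle q,n\rangle+(1-s)\langle q,w\rangle}<\frac{\langle a-q,w\rangle}{\langle q,w\rangle}
\]
is an equality of zeros unless $\langle a-q,w\rangle>0$, which is again the statement $w\notin\mathcal{N}(q)$. Since $w\in\mathcal{N}(q)$ gives $p_w=q$ and hence the same value $|q|/|q-b|$, the degenerate case does not affect the infimum computed in the subsequent Corollary; but as a proof of the Lemma as literally stated, yours is incomplete in precisely the same place the paper's is, and you should make the missing hypothesis explicit rather than promise to derive it.
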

\begin{proof}
Without loss of generality we may assume \(s' = 0\). For \(s \in [0,1]\) the points \(p(s), q,b\) are collinear, so \(|p(s)|= |p(s)-q|+|q|\) and \(|p(s)-b| =|p(s)-q| +|q| + |b| \). Therefore:
\[
\frac{|p(s)|}{|p(s) - b|} = \frac{|p(s)-q|+|q|}{|p(s)-q| +|q| + |b|}.
\]
Hence it is enough for \(|p(s)-q| < |p(0)-q|\) whenever \(s >0\). Since \(q \neq 0\) is fixed this is equivalent to:
\[
\frac{|p(s) - q|}{|q|} < \frac{|p(0) - q|}{|q|}.
\]
For each \(s \in [0,1]\) choose \(a(s) \in \partial P\) such that \(w(s) \in \mathcal{N}(a(s))\). Write \(a = a(0)\) for convenience. We then have:
\[
\frac{|p(s)-q|}{|q|} = \frac{\langle a(s)-q, w \rangle }{\langle q,w \rangle}.
\]
Note \(n \in \mathcal{N}(q)\). Now \(\langle a(s)-q,n \rangle \le 0\)  and \( \langle a(s) - q,w \rangle \le \langle a - q,w \rangle\). Clearly we have \(\langle q , n \rangle > 0\). Then:
\begin{align*}
\frac{\langle a(s) - q, w(s) \rangle }{\langle q, w(s) \rangle} &= \frac{s\langle a(s)-q, n \rangle + (1-s)\langle a(s)-q,w \rangle}{s \langle q , n \rangle + (1-s) \langle q, w \rangle} \\ &\le \frac{(1-s)\langle a-q, w \rangle }{s \langle q , n \rangle + (1-s) \langle q, w \rangle} \\ &< \frac{\langle a-q, w \rangle }{\langle q,w \rangle}.
\end{align*}
\end{proof}
\begin{corollary}
Let \(V,P,b,q,\tau,n\) be as in the introduction to this section. Fix some open halfspace \(H \subset V^\vee\) given by \(u \ge 0\) for some \(u \in V \backslash \{0\}\). This defines a projection map \(\pi: V \to V/\langle u \rangle.\) Consider the function \(F_b: V^\vee \times  [0,1] \to \RR\) given by:
\[
F_b(w,t) := t \langle b,w \rangle+ (1-t) \max_{x \in P} \langle x, w \rangle
\]
For any \(W \subseteq V^\vee\) containing \(n\) we have:
\begin{equation}
\sup (t \in [0,1] \ | \ \forall_{w \in W} \  F_b(t,w) \ge 0) = \frac{|q|}{|q-b|}.
\end{equation}
If for some choice of \(n\) we have \(n \not\in H\) then:
\begin{equation}
\sup (t \in [0,1] \ | \ \forall_{w \in H} \  F_b(t,w) \ge 0) = \frac{|\tilde{q}|}{|\tilde{q} - \pi(b)|},
\end{equation}
where \(\tilde{q}\) is the intersection of the ray \(\pi(\tau)\) with the boundary of \(\pi(P)\).
\end{corollary}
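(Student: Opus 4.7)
The plan is to reinterpret $\sup\{t \in [0,1] \mid F_b(w,t) \geq 0\}$ as a geometric quantity attached to each $w$. For $w$ with $\langle b, w\rangle \geq 0$ both summands of $F_b(w,\cdot)$ are already nonnegative on $[0,1]$ (since $0 \in P$), so the sup is $1$. For $w \in \inte(\tau^\vee)$ (i.e.\ $\langle b, w\rangle < 0$), $F_b(w,\cdot)$ is affine in $t$ and its unique zero gives
\[
t^*(w) = \frac{M(w)}{M(w) - \langle b, w\rangle}, \qquad M(w) := \max_{x \in P}\langle x, w\rangle.
\]
A short computation using the collinearity of $0, b, p_w$ and the identity $\langle p_w, w\rangle = M(w)$ rewrites this as $t^*(w) = |p_w|/(|p_w| + |b|) = |p_w|/|p_w - b|$; since $s \mapsto s/(s+|b|)$ is strictly increasing, $t^*$ is a monotone function of $|p_w|$. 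Moreover, since the feasible set for $t$ is $\bigcap_{w \in W}[0, t^*(w)]$, the quantity in question is simply $\inf_{w\in W} t^*(w)$.

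For (1), I would note that $\Pi(a,w)$ is a supporting hyperplane of $P$, so along the ray $\tau$ the point $q \in \partial P$ is reached before $p_w = \Pi(a,w)\cap\tau$; equivalently $|q| \leq |p_w|$, with equality precisely when $q \in \Pi(a,w)$, i.e.\ when $w \in \mathcal{N}(q)$. Since $n \in \mathcal{N}(q)$ by hypothesis, $t^*(n) = |q|/|q-b|$ realises the global infimum of $t^*$ over $\inte(\tau^\vee)$, and since $t^*(n) \leq 1$ this is also the global infimum over all of $V^\vee$. For any $W \ni n$ one therefore has $\inf_{w\in W} t^*(w) = |q|/|q-b|$, which is (1).

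For (2), the plan is to use Lemma~3.3 to push the infimum of $t^*$ from the halfspace $H$ onto its bounding hyperplane $H^0 = \{u = 0\}$. Given $w \in (H \cap \inte(\tau^\vee)) \setminus H^0$, the hypothesis $n \notin H$ forces the segment $s \mapsto sn+(1-s)w$ to exit $H$ at some $s^\sharp \in (0,1)$, and Lemma~3.3 yields $|p_{w(s^\sharp)}| < |p_w|$ with $w(s^\sharp) \in H^0 \cap \inte(\tau^\vee)$; hence $\inf_{w \in H} t^*(w) = \inf_{w \in H^0} t^*(w)$. Under the natural identification $H^0 \cong (V/\langle u\rangle)^\vee$ one has $\langle x, w\rangle = \langle \pi(x), \bar w\rangle$ for $w \in H^0$, so $F_b^P(w, t) = F_{\pi(b)}^{\pi(P)}(\bar w, t)$ and the ray $\tau$ becomes $\pi(\tau)$ with distinguished boundary point $\tilde q$. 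Applying part (1) downstairs to the projected data $(\pi(P), \pi(b), \tilde q)$, with any outer normal to a face of $\pi(P)$ at $\tilde q$ playing the role of $n$ and $W$ the whole dual space, delivers $|\tilde q|/|\tilde q - \pi(b)|$.

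The step I expect to require the most care is this reduction: one must verify that $\pi(b) \in \inte(\pi(P))$, that $\tilde q \neq 0$, and that a valid outer normal to $\pi(P)$ at $\tilde q$ exists, so that the hypotheses of (1) are actually available downstairs. All three follow from $b \in \inte(P)$, the fact that $P$ is full-dimensional, and the definitions, but they should be spelled out to make the reduction rigorous.
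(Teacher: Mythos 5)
Your proposal is correct and follows the same overall skeleton as the paper's proof: exchange the sup and the quantifier to get $\inf_{w} t^*(w)$, identify $t^*(w) = |p_w|/|p_w - b|$ on $\inte(\tau^\vee)$ (and $t^*\equiv 1$ elsewhere), and prove (2) by using the monotonicity lemma along $w(s)=sn+(1-s)w$ to push the infimum over $H$ onto the bounding hyperplane $\partial H$, where the restricted functional becomes $F_{\pi(b)}$ for $\pi(P)$ and part (1) applies downstairs. The one genuine divergence is in part (1): the paper again invokes the lemma, applying it to each path terminating at $n$ to conclude the infimum is attained at $s=1$, whereas you argue directly that $\Pi(a,w)$ is a supporting hyperplane of $P$, so the ray $\tau$ meets $\partial P$ at $q$ no later than it meets $\Pi(a,w)$ at $p_w$; hence $|p_w|\ge |q|$ with equality exactly when $w\in\mathcal{N}(q)$, and since $n\in\mathcal{N}(q)$ and $r\mapsto r/(r+|b|)$ is increasing, the infimum is $t^*(n)=|q|/|q-b|$. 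Your route is more elementary and makes clear that the lemma is only genuinely needed for (2); the paper's route buys uniformity, deriving both parts from the single monotonicity statement. Your closing checklist --- that $\pi(b)\neq 0$ and lies in $\inte(\pi(P))$, that $\tilde q\neq 0$, and that an outer normal to $\pi(P)$ at $\tilde q$ exists so that (1) is actually applicable downstairs --- identifies precisely the points that the paper's one-line reduction ("Applying (1) to the polytope $\pi(P)$ in the vector space $\partial H$") leaves implicit, so spelling them out as you propose would only strengthen the argument.
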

\begin{proof}
Note that:
\[
\sup (t \in [0,1] \ | \ \forall_{w \in W} \  F_b(t,w) \ge 0) = \inf_{w \in W} \sup (t \in [0,1] \ | \  F_b(t,w) \ge 0).
\]
Moreover \(\sup (t \in [0,1] \ | \  F_b(t,w) \ge 0) = 1 > F_b(t,n)\) for \(\langle b,w \rangle \ge 0\), so without loss of generality we may assume \(W \subseteq \inte(\tau^\vee)\). For \(w \in W\) then:
\begin{align*}
\sup (t \in [0,1] \ | \  F_b(t,w) \ge 0) &= \frac{\max_{x \in P} \langle x, w \rangle}{ \max_{x \in P} \langle x, w \rangle - \langle b, w \rangle } \\ &= \frac{ \langle a,w \rangle}{\langle a ,w \rangle - \langle b,w \rangle} \\ &= \frac{ \langle p_w,w \rangle}{\langle p_w ,w \rangle - \langle b,w \rangle} = \frac{|p_w|}{|p_w-b|}.
\end{align*}
Hence:
\[
\sup (t \in [0,1] \ | \ \forall_{w \in W} \  F_b(t,w) \ge 0) = \inf_{w \in W} \frac{|p_w|}{|p_w-b|}.
\]
Now for \(w \in W\) consider the continuity path \(w(s) = sn + (1-s)w\). By Lemma 1 if \(n \in W\) then the above infimum is attained when \(s=1\) and we obtain (1). Otherwise the infimum is attained at some \(w \in \partial W\). For (2) restricting \(F_b\) to \(\partial H \times [0,1]\) gives:
\begin{align*}
F_b(w,t) =  t \langle \pi(b) ,w \rangle+ (1-t) \max_{x \in \pi(P)} \langle x, w \rangle.
\end{align*}
Applying (1) to the polytope \(\pi(P)\) in the vector space \(\partial H\) we obtain (2).
\end{proof}
\section{Proof of Theorem 1.1}
Let \(T\) be an algebraic torus. We recall the definition of a divisorial polytope, which is used to generalize the polytope description of a polarized toric variety to the complexity one case. We will be able to calculate \(R(X)\) for our varieties using this combinatorial data.
\begin{definition}
A divisorial polytope function \(\Psi\) on a lattice polytope \(\Box \subset M_\RR\):
\[
\Psi: \Box \to \wdiv_\RR \PP^1, \ u \mapsto \Sigma_{y \in \PP^1} \Psi_y(u) \cdot \{y\},
\]
such that:
\begin{itemize}
\item For \(y \in \PP^1\) the function \(\Psi_y: \Box \to \RR\) is the minimum of finitely many  affine functions, and \(\Psi_y \equiv 0\) for all but finitely many \(y \in \PP^1\).
\item Each \(\Psi_y\) takes integral values at the vertices of the polyhedral decomposition its regions of affine linearity induce on \(\Box\).
\item \(\deg \Psi(u) > -2\) for \(u \in \text{int} (\Box)\);
\end{itemize}
A divisorial polytope is said to be Fano if additionally we have that:
\begin{itemize}
\item The origin is an interior lattice point of \(\Box\).
\item The affine linear pieces of each  \(\Psi_y\) are of the form \(u \mapsto \frac{\langle v,u \rangle - \beta + 1}{\beta}\) for some primitive lattice element \(v \in N\);
\item Every facet \(F\) of \(\Box\) with \((\deg \circ \Psi _{|F}) \neq -2\) has lattice distance \(1\) from the origin.
\end{itemize}
\end{definition}
Let \(\Psi\) be a divisorial polytope. We may construct a complexity one polarized \(T\)-variety from the graded ring \(S\) given by: 
\[
S_k := \bigoplus_{u \in \Box \cap \frac{1}{k} M} H^0(\PP^1, \mathcal{O}(\lfloor k \cdot ( \Psi(u) +D) \rfloor),
\]
where \(D\) is some integral divisor of degree \(2\). We may also recover a divisorial polytope \(\Psi\) from any polarized complexity one \(T\)-variety \((X,L)\) such that \( H^0(X,L^k) = S_k\). Moreover Fano divisorial polytopes correspond to Fano \(T\)-varieties \((X,-K_X)\). Thus all Fano complexity one \(T\)-varieties may be described in this way. For more details of this construction and the correspondence see \cite{Sues2014} or \cite{Ilten2017}.

A Fano divisorial polytope fixes a distinguished representative of \(-K_X\) and specifies a linearisation of the action of \(T\) to \((X,-K_X)\). By the proof of \cite[Theorem 3.21]{Petersen2011} it is seen that the linearisation coincides with the canonical linearisation of \((X,-K_X)\), and so the moment map specified by the divisorial polytope, with image \(\Box\), coincides with the moment specified by the canonical linearisation.

We now recall some basic terminology for divisorial polytopes. The push-forward of the measure induced by \(\omega\) is known as the Duistermaat-Heckman measure, independent of the choice of \(\omega\) and which we denote by \(\nu\). Denote the standard measure on \(M_\RR\) by \(\eta\).

\begin{definition}
Let \(\Psi\) be a divisorial polytope.
\begin{itemize}
\item The degree of \(\Psi\) is the map \( \deg \Psi : \Box \to \RR\) given by \( u \mapsto \deg (\Psi(u))\). \\
\item The barycenter of \(\Psi\) is \(\bc(\Psi) \in \Box\), such that for all \(v \in N_\RR\):
\[
\langle \bc(\Psi), v \rangle = \int_\Box v \cdot \deg \Psi \ d \eta = \int_\Box v d \nu. 
\]
Note by the second equality we see \(\bc(\Psi) = \bc_\nu(\Box).\) \\
\item The volume of \(\Psi\) is defined to be:
\[
\vol \Psi = \int_\Box \deg \Psi \ d \eta = \int_\Box d \nu.
\]
\end{itemize}
\end{definition}

From here on let \((X,-K_X)\) be the polarized complexity one Fano \(T\)-manifold given by a fixed divisorial polytope \(\Psi: \Box \to \wdiv_\RR \PP^1\), with \(\bc_\nu(\Box) \neq 0\). We will calculate \(R(X)\) by considering first the product configurations and then the non-product ones.
\subsection{Product Configurations}
If \((\X,\L)\) is a product configuration then we have \(\X_0 \cong X\). Assuming \(X\) is non-toric, the maximality of \(T\) in \(\Aut(X)\) ensures that the restriction of \(\lambda\) to \(\X_0\) is a one parameter subgroup of \(T\), given by a choice of \(w \in N\).
We then have:
\begin{align*}
\DF_t(\X,\L)(w) &=   \DF(\X,\L)(w) + \frac{(1-t)}{V} \int_{X} (\max \theta_w - \theta_w) \omega^n \\ &= \langle  \bc(\Psi), w \rangle + \frac{(1-t)}{\vol \Psi} \int_\Box \max_{x \in \Box} \langle x,w \rangle  - \langle \cdot, w \rangle d \eta \\
&= t \langle  \bc(\Psi), w \rangle + (1-t)\max_{x \in \Box} \langle x,w \rangle.
\end{align*}
Let \(q \in N_\RR\) be the point of intersection of the ray generated by \(-\bc(\Psi)\) with \(\partial \Box\). Applying (1) we then obtain:
\[
\sup(t | \DF_t(\X,\L) \ge 0) = \frac{|q|}{|q-\bc_\nu(\Box)|}.
\]
\subsection{Non-Product Configurations}
We recall, from \cite{Ilten2017}, the description of special fibers of non-product special configurations. Let \(X\) be a Fano \(T\)-variety of complexity \(1\), corresponding to the Fano divisorial polytope \(\Psi\). The toric central fibers of non-product special test configurations were explicitely described in \cite{Ilten2017}. Namely there exists some \(y \in \PP^1\), with at most one of \(\Psi_z\) having non-integral slope at any \(u \in \Box\) for \(z \neq y\), such that \(\X_0\) is the toric variety corresponding to the following polytope:
\begin{equation*}
\Delta_y := \Big\{(u,r) \in M_\RR \times \RR \; \Big| \; u \in \Box,\; -1-\sum_{z \neq y} \Psi_z(u) \leq r \leq 1+\Psi_y(u)\Big\}.\label{eq:special-fibre}
\end{equation*}

The induced \(\CC^*\)-action on \(\X_0\) is given by the one-parameter subgroup of 
 \(T' = T \times \CC^*\) corresponding to \(v'=(-mv,m) \in N \times \ZZ\), for some \(v \in N\). In fact it turns out, from \cite{Ilten2017}, it is enough to consider those configurations with \(m=1\).
\begin{proposition}
Let \((\X,\L)\) be a special non-product test configuration with toric special fiber and induced \(\CC^*\)-action \(v' = (-v,1) \) as above. We then have:
\[
\DF_t( \X, \L )  = t \langle \bc(\Delta_y), v' \rangle + (1-t) \max_{x \in \Delta_y} \langle x, v' \rangle.
\]
\end{proposition}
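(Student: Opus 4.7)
The plan is to mimic the computation from the product-configuration case in Subsection 4.1 with the polytope $\Delta_y$ replacing $\Box$, exploiting the fact that $\X_0$ is toric under the enlarged torus $T' = T \times \CC^*$. The work is thus pushed from K\"ahler geometry to a routine Duistermaat--Heckman calculation on $\Delta_y$.

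First I would identify the moment map of $T'$ on $\X_0$, induced by the canonical linearization on $-K_{\X_0}$, with the standard toric moment map having image $\Delta_y$. This parallels the discussion following Definition 4.1 (via \cite{Petersen2011}), where the linearization determined by the divisorial polytope on $-K_X$ was shown to coincide with the canonical one, so that the image of the moment map is $\Box$. The same reasoning applies to the toric central fiber $\X_0$, whose polytope $\Delta_y$ was constructed precisely to describe it together with the induced linearization. Under this identification, the Hamiltonian $\theta_{v'} = \langle \mu, v' \rangle$ becomes the linear function $x \mapsto \langle x, v' \rangle$ on $\Delta_y$.

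Second, because $\X_0$ is toric, the Duistermaat--Heckman measure on $\Delta_y$ is (proportional to) Lebesgue measure, so the pushforward of $\omega^n$ by $\mu$ gives
\[
\frac{1}{V}\int_{\X_0} f(\mu)\,\omega^n \;=\; \frac{1}{\vol(\Delta_y)}\int_{\Delta_y} f\, d\eta
\]
for any continuous $f$ on $\Delta_y$, where $V = \vol(\Delta_y)$ with the author's normalization. Taking $f(x) = \langle x, v'\rangle$ yields $\DF(\X,\L)(v') = \langle \bc(\Delta_y), v'\rangle$, while $\max_{\X_0} \theta_{v'} = \max_{x \in \Delta_y} \langle x, v'\rangle$ is immediate. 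Substituting into the definition of $\DF_t$,
\[
\DF_t(\X,\L)(v') \;=\; \langle \bc(\Delta_y), v'\rangle + (1-t)\bigl(\max_{x \in \Delta_y} \langle x, v'\rangle - \langle \bc(\Delta_y), v'\rangle\bigr),
\]
which collapses to the claimed formula. This is precisely the calculation already carried out in the product case, but with $\Delta_y$ and $v'$ in place of $\Box$ and $w$.

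The main obstacle is the first step: one must verify that the canonical $T'$-linearization on $-K_{\X_0}$ yields a toric moment map with image exactly $\Delta_y$, so that the Hamiltonian corresponding to $v' = (-v,1)$ is precisely $\langle \cdot, v' \rangle$ on $\Delta_y$. This is the same compatibility already invoked in Subsection 4.1, now transported to the toric central fiber via the Ilten--S\"u\ss\ description; once it is in hand, the remainder is a purely combinatorial Duistermaat--Heckman computation.
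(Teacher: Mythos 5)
Your proposal is correct and follows essentially the same route as the paper: identify the $T'$-moment map image of the toric central fiber with $\Delta_y$, note that the Hamiltonian of $v'$ is the linear function $\langle \cdot, v'\rangle$, and push the integrals in the definition of $\DF_t$ forward to $\Delta_y$. The only difference is that you re-derive $\DF(\X,\L) = \langle \bc(\Delta_y), v'\rangle$ from the toric Duistermaat--Heckman measure being Lebesgue measure on $\Delta_y$, whereas the paper simply cites this formula from Ilten--S\"u\ss; both are fine.
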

\begin{proof}
In \cite{Ilten2017} the formula \(\DF(\X,\L) = \langle \bc(\Delta_y,v'\rangle\) is given. Note that the Hamiltonian function, by definition, satisfies \(\theta_w(x) = \langle \mu(x),w \rangle\). We may then calculate the remaining integrals on the image of the moment map, \(\Delta_y\).
\end{proof}
By \cite{Ilten2017} there are a finite number of possible special fibers \(\X_0\) of special test configurations of \((X,L)\). Label the corresponding polytopes \(\Delta_1,\dots,\Delta_m\). Set \(H := N_\RR \times \RR^+\).
\begin{proposition}
For any non-product configuration \((\X,\L)\) with special fiber one of the \(\Delta_i\) above, let \(\sigma_i\) be the cone of outer normals to \(\Delta_i\) at the unique point of intersection of \(\partial \Delta_i\) with the ray generated by \(-\bc(\Delta_i)\). Denote this point of intersection by \(q_i\). Then:
\[
\sup(t | \DF_t(\X,\L) \ge 0) =\begin{cases} 
     \frac{|q_i|}{|q_i - \bc(\Delta_i)|} & \sigma_i \cap H \neq \emptyset ;\\ \\
      \frac{|q|}{|q-\bc_\nu(\Box)|} & \sigma_i \cap H = \emptyset.
   \end{cases}
\]
\end{proposition}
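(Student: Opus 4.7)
The plan is to interpret the LHS as the infimum, over all non-product configurations with special fiber $X_{\Delta_i}$, of $\sup(t \mid \DF_t(\X,\L) \ge 0)$: the right-hand side depends only on $i$, whereas for a single configuration the sup depends on the one-parameter subgroup $v'$, so this reading is forced (and is exactly what is needed to piece together Theorem 1.1 via the remark of Section 2). By Proposition 4.1, for a configuration with $v' = (-v,1)$,
\[
\DF_t(\X,\L) = F_{\bc(\Delta_i)}(t, v'), \quad \text{where } F_b(t,w) := t\langle b, w\rangle + (1-t)\max_{x \in \Delta_i}\langle x, w\rangle,
\]
which is exactly the function $F_b$ of Corollary 3.2 with $V = N'_\RR$ and $P = \Delta_i$. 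Since $\bc(\Delta_i) \in \inte(\Delta_i)$, $F_{\bc(\Delta_i)}(\cdot, v')$ is strictly decreasing in $t$, so $\{t \in [0,1] \mid F_{\bc(\Delta_i)}(t, v') \ge 0\}$ is an initial interval; by this monotonicity,
\[
\inf_{v'}\, \sup\bigl(t \mid F_{\bc(\Delta_i)}(t,v') \ge 0\bigr) = \sup\bigl(t \in [0,1] \mid \forall v',\ F_{\bc(\Delta_i)}(t,v') \ge 0\bigr).
\]

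Next I identify the domain of the infimum with the open halfspace $H = N_\RR \times \RR^+$. The valid $v'$ from non-product configurations are of the form $(-v,1)$ with $v \in N$, and since $F_b(t, \cdot)$ is positively homogeneous in $w$ and continuous, together with lattice density in $N_\RR$, the infimum over valid $v'$ equals
\[
\sup\bigl(t \in [0,1] \mid \forall w \in H,\ F_{\bc(\Delta_i)}(t,w) \ge 0\bigr).
\]
This is exactly the kind of expression treated by Corollary 3.2, with $b = \bc(\Delta_i)$, $\tau = \RR^+(-\bc(\Delta_i))$, $q = q_i$, and cone of outer normals $\sigma_i$ at $q_i$.

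To conclude, I split into the two cases. If $\sigma_i \cap H \ne \emptyset$, pick an outer normal $n \in \sigma_i \cap H$ to $F_i$ at $q_i$. Since $q_i = -c\,\bc(\Delta_i)$ for some $c>0$ and $\langle q_i, n\rangle > 0$ (as $0 \in \inte(\Delta_i)$ and $n$ is an outer normal at $q_i$), we get $\langle \bc(\Delta_i), n\rangle < 0$, hence $n \in \inte(\tau^\vee)$. Applying Corollary 3.2(1) with $W = H \cap \inte(\tau^\vee) \ni n$ yields $|q_i|/|q_i - \bc(\Delta_i)|$. If instead $\sigma_i \cap H = \emptyset$, every outer normal to $F_i$ lies outside $H$, so Corollary 3.2(2) applies with $u = (0,1) \in N'$; the projection $\pi: N'_\RR \to N_\RR$ forgetting the last coordinate satisfies $\pi(\Delta_i) = \Box$ and $\pi(\bc(\Delta_i)) = \bc_\nu(\Box)$ (as noted in the introduction), giving $|q|/|q - \bc_\nu(\Box)|$.

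The main obstacle will be the halfspace reduction: checking carefully that positive homogeneity of $F_b(t, \cdot)$ and lattice density together allow one to replace integral $(-v, 1)$ by all of $H$ without changing the infimum, and verifying the projection identity $\pi(\Delta_i) = \Box$ from the explicit formula for $\Delta_y$ recalled just before Proposition 4.1.
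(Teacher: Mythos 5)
Your proposal is essentially the paper's own proof, made considerably more explicit: the paper's entire argument is to ``extend \(\DF_t(\X,\L)\) linearly to the whole of \(N_\RR\times\RR\)'' and invoke Corollary 3.2, applying (1) with \(P=\Delta_i\), \(b=\bc(\Delta_i)\) when \(\sigma_i\cap H\neq\emptyset\) and (2) otherwise, using exactly your observations \(\pi(\Delta_i)=\Box\) and \(\pi(\bc(\Delta_i))=\bc_\nu(\Box)\); the inf-over-configurations reading, the quantifier swap via monotonicity in \(t\), and the check that \(n\in\inte(\tau^\vee)\) are all left implicit there. The one step you rightly flag as the main obstacle --- passing from the integral \(v'=(-v,1)\) to all of \(H\) --- is glossed identically in the paper, though be aware that ``lattice density'' alone does not literally suffice (the rays through \(\{(-v,1):v\in N\}\) meet the height-one slice only in lattice points, which are not dense there), so a careful version must lean on degree-zero homogeneity together with the monotone paths of Lemma 3.1, with rational directions becoming dense only toward \(\partial H\).
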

\begin{proof}
Extend \(\DF_t(\X,\L)\) linearly to the whole of \(N_\RR \times \RR\). In the case \(\sigma_i \cap H \neq \emptyset\) we may apply (1) from Corollary 2 with \(P = \Delta_i\) and \(b = \bc(\Delta_i)\). Otherwise we may apply (2), noting that \(\pi(\Delta_i) = \Box\) and \(\pi(\bc(\Delta_i)) = \bc_\nu( \Box)\).
\end{proof}
\begin{proof}[of Theorem 1.1]
With Remark 2 in mind, observe that a special test configuration must either be product or non-product. Any non-product configurations \(\Delta_i\) with \(\sigma_i \cap H \neq \emptyset\) have their contribution to the infimum already accounted for and we may exclude them. The result follows.
\end{proof}
\begin{proof}[of Corollary 1.2]
We leave it to the reader to check that for each threefold we have \(n_i \notin H\) for every \(\Delta_i\) associated to a special test configuration. The divisorial polytopes and Duistermaat-Heckman measures may be found in \cite{Sues2014}. We may then calculate \(R(X)\) using just the base polytope \(\Box\) and its Duistermaat-Heckman barycenter.
\end{proof}
\begin{acknowledgements}\label{ackref}
I am very grateful to my supervisor Hendrik S{\"u}{\ss} for his advice and the many useful conversations we had whilst this paper was in preparation. I would also like to thank my anonymous referee, who provided many excellent comments and corrections.
\end{acknowledgements}

%
%
\end{document}